\newcommand{\Shigh}{\mbox{\rm \textsf{Shigh}}}
\newcommand{\HH}{\mathcal{K}}
\newcommand{\cost}{c_{\HH}}
\newcommand{\exo}[1]{\exists #1 \, }
\newcommand{\fao}[1]{\forall #1 \, }
\newcommand{\GL}[1]{\mbox{\rm GL}_{#1}}
\newcommand{\ltt}{\le_{\mathrm{tt}}}
\newcommand{\ttext}[1]{\ \text{#1}\ }
\newcommand{\SI}[1]{\Sigma^0_{#1}}
\newcommand{\PI}[1]{\Pi^0_{#1}}
\newcommand{\PPI}{\PI{1}}
\newcommand{\Kuc}{Ku{\v c}era}
\newcommand{\strcantor}{2^{<\omega}}
\newcommand{\MLR}{\mbox{\rm \textsf{MLR}}}
\newcommand{\BLR}{\mbox{\rm \textsf{BLR}}}
\newcommand{\NN}{{\mathbb{N}}}
\newcommand{\ex}{\exists}
\newcommand{\fa}{\forall}
\newcommand{\n}{\noindent}
\newcommand{\wt}{\widetilde}
\newcommand{\sub}{\subseteq}
\renewcommand{\land}{\&}
\newcommand{\ES}{\emptyset}
\renewcommand{\tilde}{\widetilde}
\newcommand{\lland}{\ \land \ }
\newcommand{\la}{\langle}
\newcommand{\ra}{\rangle}
\newcommand{\bi}{\begin{itemize}}
\newcommand{\ei}{\end{itemize}}
\newcommand{\bc}{\begin{center}}
\newcommand{\ec}{\end{center}}
\newcommand{\sss}{\sigma}
\newcommand{\aaa}{\alpha}
\newcommand{\leT}{\le_T}
\newcommand{\leb}{\mathbf{\lambda}}
\newcommand{\Om}{\Omega}
\newcommand{\twoset}{\{0,1\}}
\newcommand{\sN}[1]{_{#1\in \NN}}
\newcommand{\estring}{\emptyset}
\newcommand{\tp}[1]{2^{#1}}
\newcommand{\uhr}[1]{\!  \upharpoonright_{#1}}
\newtheorem{theorem}{Theorem}[section]
\newtheorem{thm}[theorem]{Theorem}
\newtheorem{definability lemma}[theorem]{Definability Lemma}
\newtheorem{fact}[theorem]{Fact}
\newtheorem{df}[theorem]{Definition}
\newtheorem{lemma}[theorem]{Lemma}
\newtheorem{cor}[theorem]{Corollary}
\newtheorem{question}[theorem]{Question}
\newcommand{\N}{\mathbb N}
\newcommand{\Implies}{\Rightarrow}
\begin{document}

\title{Superhighness}
\author{Bj{\o}rn Kjos-Hanssen}
\address{Department of Mathematics\\
University of Hawaii at Manoa\\ 2565 McCarthy Mall \\ Honolulu, HI 96822 \\ USA} \email{bjoern@math.hawaii.edu}

\author{Andr\'e Nies}
\address{Department of Computer Science\\
  University of Auckland\\
  Private Bag 92019\\ Auckland \\ New Zealand } \email{andre@cs.auckland.ac.nz}

\begin{abstract}
We prove that   superhigh sets  can be jump traceable, answering a question of Cole and Simpson. On the other hand, we show that such sets cannot be weakly 2-random. We also study the class superhigh$^\Diamond$, and show that it contains some, but not all, of the noncomputable $K$-trivial sets. 
\end{abstract}

\thanks{Kjos-Hanssen is partially supported under NSF grant DMS-0652669. Nies is supported under Marsden fund UOA-08-187.}

\maketitle

\section{Introduction} 

An important non-computable set of integers in computability theory is $\ES'$, the halting problem for Turing machines. Over the last half century many interesting results have been obtained about   ways in which a problem can be almost as hard as $\ES'$. The \emph{superhigh} sets  are the sets $A$ such that $$A'\ge_{tt}\ES'',$$ i.e., the halting problem relative to $A$ computes $\ES''$ using a truth-table  reduction. The name comes from comparison with the \emph{high} sets, where instead arbitrary Turing reductions are allowed ($A'\ge_T \ES''$). Superhighness for computably enumerable (c.e.) sets was introduced by Mohrherr \cite{Mohrherr}. She proved that  the superhigh c.e.\ degrees sit properly between the high and Turing complete ($A\ge_T \ES'$) ones. 

Most questions one can ask on superhighness are currently open. For instance, Martin \cite{Martin} (1966) famously proved that a degree is high iff it can compute a function dominating all computable functions, but it is not known whether superhighness can be characterized in terms of domination. Cooper \cite{Cooper} showed that there is a high minimal Turing degree, but we do not know whether a superhigh set can be of minimal Turing degree. We hope the present paper lays the groundwork for a future understanding of these problems.

We prove that a superhigh set can be jump traceable. Let  superhigh$^\Diamond$ be the class of c.e.\ sets Turing below all Martin-L\"of random (ML-random) superhigh sets (see \cite[Section 8.5]{Nies:book}). We show that this class contains a promptly simple set,  and  is a proper subclass of the c.e. $K$-trivial sets.  This class was recently shown to coincide with the strongly jump traceable c.e.\  sets,    improving our result \cite{Nies:nd}.

\begin{df}
Let $\{\Phi_n^X\}_{n\in\N}$ denote a standard list of all functions partial computable in $X$, and let $W_n^X$ denote the domain of $\Phi_n^X$. We write $J^X(n)$ for $\Phi^X_n(n)$, and $J^\sss(n)$ for $\Phi^\sss_n(n)$ where $\sss$ is a string. Thus $X'=\{e\colon \ J^B(e)\downarrow\}$ represents the halting problem relative to $X$. 

 $X$ is \emph{jump-traceable by} $Y$ (written $X \le_{JT} Y$) if there exist computable functions $f(n)$ and $g(n)$ such that for all $n$, if $J^X(n)$ is defined ($J^X(n)\downarrow$) then $J^X(n)\in W_{f(n)}^Y$ and for all $n$, $W_{f(n)}^Y$ is finite of cardinality $\le g(n)$.
\end{df} The relation $\le_{JT}$ is transitive and indeed a weak reducibility \cite[8.4.14]{Nies:book}.  Further information on weak reducibilities, and jump traceability, may be found in the  recent book by Nies \cite{Nies:book}, especially in Sections 5.6 and 8.6, and 8.4, respectively.

\begin{df} $A$ is \emph{JT-hard} if $\ES'$ is jump traceable by $A$.
Let $\Shigh =\{Y: Y'\ge_{tt} \ES''\}$ be the class of superhigh sets.
\end{df}

\begin{thm} \label{consider}
Consider the following five properties of a set $A$.
\begin{enumerate}
\item $A$ is Turing complete; 

\item $A$ is almost everywhere dominating; 

\item $A$ is JT-hard; 
 
\item $A$ is superhigh;

\item $A$ is high. 
\end{enumerate}

We have (1)$\Implies$(2)$\Implies$(3)$\Implies$(4)$\Implies$(5), all implications being strict. 
\end{thm}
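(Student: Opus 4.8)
The plan is to prove the four implications in order and then the four strictness claims, with the genuine work concentrated in (2)$\Implies$(3) and in the truth-table upgrade hidden inside (3)$\Implies$(4). The implication (4)$\Implies$(5) is immediate, since a truth-table reduction is in particular a Turing reduction, so $A'\ge_{tt}\ES''$ gives $A'\ge_T\ES''$. For (1)$\Implies$(2) I would first recall that $\ES'$ is itself almost everywhere dominating: for each Turing functional $\Phi$ the function sending $n$ to the least stage capturing all but $2^{-n}$ of the measure of $\{X:\Phi^X(n)\da\}$ is $\ES'$-computable and dominates $\Phi^X(n)$ for almost every total $X$ (the Dobrinen--Simpson argument). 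Almost everywhere domination is closed upward under $\le_T$, since $f\le_T\ES'$ entails $f\le_T A$ whenever $A\ge_T\ES'$, so every Turing-complete $A$ is almost everywhere dominating.

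For (3)$\Implies$(4), assume $\ES'$ is jump-traceable by $A$ with computable size bound $g$. By padding there is a computable $q$ with $J^{\ES'}(q(n))$ equal to the modulus $m(n)$, the least stage settling $\ES'\uhr{n+1}$; tracing this yields an $A$-c.e.\ set $M_n\ni m(n)$ of size $\le g(q(n))$. The clean observation is that $n\in\ES'$ iff $\exists t\in M_n$ with $n\in\ES'_t$, a single $\Sigma^0_1(A)$ event, so $\ES'\le_m A'$. For $\ES''$ I would instead trace the use $U(m)$ of $J^{\ES'}(m)$, obtaining a bounded $A$-c.e.\ set $U_m\ni U(m)$, and use that $J^{\ES'}(m)\da$ iff $\exists u\in U_m$ with $\Phi_m^{\ES'\uhr{u}}(m)\da$; this is correct precisely because $\ES'\uhr{u}$ is a true initial segment, so no spurious convergence occurs. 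Composing with $\ES'\le_m A'$ gives $\ES''\le_T A'$. The hard part is upgrading this to a truth-table reduction: the computable bound $g$ must be exploited to bound the number of oracle queries (equivalently, to produce an $A$-computable approximation of $\ES''$ with computably bounded mind-changes) so that the reduction is total on every oracle. This upgrade is exactly where superhighness sits strictly above highness.

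The implication (2)$\Implies$(3) is the main obstacle. I would route it through the characterization of almost everywhere domination as LR-hardness, $\ES'\le_{\mathrm{LR}}A$ (Kjos-Hanssen--Miller--Solomon), and then convert domination into a computably bounded $A$-trace of $J^{\ES'}$. Concretely, almost everywhere domination lets $A$ dominate, for almost every random $X$, the $X$-computable settling-time functions attached to $\ES'$-computations; integrating over a positive-measure set of such $X$ and collecting the resulting finite pools of candidate values should yield a bounded $A$-c.e.\ trace for $J^{\ES'}(n)$. The delicate point is forcing the size of these pools to be bounded by a \emph{computable} function; here I expect to use that $\ES'$ is c.e., so each relevant initial segment changes boundedly often below a dominated stage.

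Finally, each strictness is witnessed by a separate example: a $\DII$ almost everywhere dominating set that is not Turing-complete for (1)$\subsetneq$(2); a JT-hard set failing almost everywhere domination for (2)$\subsetneq$(3); a superhigh set that is not JT-hard for (3)$\subsetneq$(4); and a high set with $A'\not\ge_{tt}\ES''$, e.g.\ from Mohrherr's separation of the high and superhigh c.e.\ degrees, for (4)$\subsetneq$(5). I expect (2)$\subsetneq$(3) and (3)$\subsetneq$(4) to demand genuine injury constructions designed to defeat the stronger property while securing the weaker one.
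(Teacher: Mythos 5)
Your proposal has genuine gaps at exactly the points where the theorem has content. Take your (3)$\Implies$(4) first: the ``clean observation'' $\ES'\le_m A'$ is true of \emph{every} set $A$ whatsoever (since $\ES'$ is $\Sigma^0_1$, hence $\Sigma^0_1(A)$), so it contributes nothing; and your use-tracing argument, once the quantifier over the trace is handled properly (let $A'$ compute the exact size of $U_m$ by finitely many $\Sigma^0_1(A)$ queries and then enumerate it), yields only $\ES''\le_T A'$, i.e.\ highness. The ``upgrade to truth-table'' that you defer \emph{is} the implication. The argument that actually works (Simpson's Lemma 8.6, equivalently the \BLR{} route in Section 2 of the paper) exploits that $\ES'$ is c.e.: one traces $J^{\ES'}(e)$ together with a stage certifying an $\ES'$-correct computation; because $\ES'$ only grows, each traced certificate can pass from ``apparently valid'' to ``invalid'' at most once, so one gets an $A$-computable approximation to $\ES''$ whose number of mind changes is computably bounded in terms of the trace bound, and $\ES''\in\BLR(A)$ is equivalent to $\ES''\le_{tt}A'$. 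Nothing in your sketch produces this bounded mind-change counting; indeed the naive count fails because the traced stages are not computably bounded. Your (2)$\Implies$(3) is likewise only an expectation: the known proof passes from almost everywhere domination to $\ES'\le_{LR}A$, hence $\ES'\le_{LK}A$ (Kjos-Hanssen--Miller--Solomon), and then traces $J^{\ES'}(n)$ by the $A$-c.e.\ set of values that are $A$-compressible to $2\log n+O(1)$ bits, whose size is computably bounded for free. Your ``integrating over a positive-measure set'' picture never yields the computable size bound, which you yourself flag as the delicate point.

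More seriously, the strictness claims --- which are part of the statement and are where the paper's new work lies --- are not proved at all. For (4)$\not\Implies$(3) the paper constructs a superhigh \emph{jump-traceable} set (Theorem~\ref{kjosEmail}): apply Mohrherr's tt-jump inversion to $\ES''$, and observe that the Friedberg-style finite-extension construction answers only two binary questions per coordinate, so the resulting $B$ has a jump trace of size $4^e$ \emph{regardless of the oracle coded in}; then no jump-traceable set is JT-hard, by transitivity of $\le_{JT}$ together with $\ES'\not\le_{JT}\ES$. For (3)$\not\Implies$(2) the paper combines Cole--Simpson's coincidence of (3) and (4) on $\Delta^0_2$ sets with Jockusch--Shore jump inversion applied to a super-low, non-$K$-trivial set (which exists since two super-low degrees can join to $\ES'$ while the $K$-trivials are closed under join). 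Your proposal replaces both separations with ``I expect genuine injury constructions,'' which is not an argument and also mispredicts the method: both are obtained by jump inversion plus structural facts, not by injury. By contrast, the forward implications are, in the paper, handled by citation (Dobrinen--Simpson; Simpson), so your incomplete attempts to reprove them were the dispensable part; it is the separations, above all (4)$\not\Implies$(3), that cannot be outsourced and are missing.
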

\begin{proof} Implications:
(1)$\Implies$(2): Dobrinen and Simpson \cite{DS}.
(2)$\Implies$(3): Simpson \cite{SimpsonSuper} Lemma 8.4.
(3)$\Implies$(4): Simpson \cite{SimpsonSuper} Lemma 8.6.
(4)$\Implies$(5): Trivial, since each truth-table reduction is a Turing reduction.

Non-implications: (2)$\not\Implies$(1) was proved by Cholak, Greenberg, and Miller \cite{CGM}.
(3)$\not\Implies$(2): By Cole and Simpson \cite{CS}, (3) coincides with (4) on the $\Delta^0_2$ sets. But there is a superhigh degree that does not satisfy (2): one can use Jockusch-Shore Jump Inversion for a super-low but not $K$-trivial set, which exists by the closure of the $K$-trivials under join and the existence of a pair of super-low degrees joining to $\ES'$. 
(4)$\not\Implies$(3): We prove in Theorem \ref{kjosEmail} below that there is a jump traceable superhigh degree. By transitivity of $\le_{JT}$ and the observation that $\ES'\not\le_{JT}\ES$, no jump traceable degree is JT-hard.
(5)$\not\Implies$(4): Binns, Kjos-Hanssen, Lerman, and Solomon \cite{BKLS} proved this using a syntactic analysis combined with a result of Schwartz \cite{Schwartz}.
\end{proof}

Historically, the easiest separation (1)(5) is a corollary of Friedberg's Jump Inversion Theorem \cite{Friedberg} from 1957. The separation (1)(4) follows similarly from Mohrherr's Jump Inversion Theorem for the tt-degrees \cite{Mohrherr} (1984), and the separation (4)(5) is essentially due to Schwartz \cite{Schwartz} (1982). The classes (2) and (3) were introduced more recently, by Dobrinen and Simpson \cite{DS} (2004) and Simpson \cite{SimpsonSuper} (2007). 

Notion (3), JT-hardness, may not appear to be very natural. However,  Cole and Simpson \cite{CS} gave  an embedding of the hyperarithmetic hierarchy $\{0^{(\alpha)}\}_{\alpha<\omega_1^{CK}}$ into the lattice of $\Pi^0_1$ classes under Muchnik reducibility making use of the notion of  \emph{bounded limit recursive} (\BLR) functions. We will see that JT-hardness coincides with  \BLR-hardness.
 
{\it Notation. }  We write \bc $\forall n \, f(n) = \lim_s^\text{\rm comp} \wt  f(n,s) $  \ec 
 if  for all $n$, $f(n)=\lim_s \tilde f(n, s)$, and moreover there is  a computable   function $g: \omega\rightarrow\omega$ such that for all $n$,
$\{s\, | \,\tilde f(n, s) \ne \tilde f(n, s+1)\}$ has  cardinality less than $g (n)$.

\section{Superhighness and jump traceability}
In this section we show that superhighness is compatible with the lowness property of being jump traceable, and deduce an answer to a question of Cole and Simpson. 
\begin{thm}\label{kjosEmail}
There is a superhigh jump-traceable set.
\end{thm}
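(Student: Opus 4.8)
The plan is to build $A$ as a c.e.\ set by a cost-function construction, simultaneously coding a $\Sigma^0_2$-complete set into $A'$ (for superhighness) and maintaining a computably bounded trace of $J^A$ (for jump traceability). For the superhighness side I would aim for the many-one reduction $\mathrm{Fin}\le_m A'$, where $\mathrm{Fin}=\{e:W_e\text{ is finite}\}$; since $\mathrm{Fin}\equiv_m\ES''$ and $\le_m$ is a special case of $\ltt$, this yields $\ES''\ltt A'$, i.e.\ $A\in\Shigh$. Concretely, using the recursion theorem I would fix in advance a computable sequence of indices $n(e)$ and, during the construction, define the machines $\Phi_{n(e)}$ so that $J^A(n(e))\da$ if and only if $W_e$ is finite. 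The gadget for the coding requirement $C_e$ keeps a current coding location $x_e$ together with a large, construction-controlled use for $\Phi_{n(e)}$: whenever a new element enters $W_e$, I enumerate $x_e$, which lies below the current use, into $A$, destroying the current convergence of $J^A(n(e))$ and re-setting it with a fresh, larger use. Thus $W_e$ finite forces $C_e$ to act only finitely often, so $J^A(n(e))$ stabilizes and converges, while $W_e$ infinite forces infinitely many resets, so $J^A(n(e))\UA$.

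For jump traceability I would enumerate, for every $n$, all values $J^{A_s}(n)$ that ever appear into a trace $W_{f(n)}$, so the only thing to control is the number of \emph{distinct} such values. Two ideas handle this. First, I make each coding machine $\Phi_{n(e)}$ output a single fixed value (say $0$) whenever it halts; then no matter how often $J^A(n(e))$ is reset, its trace contains at most one value, so the possibly enormous, non-computably-bounded size of $W_e$ never pollutes the trace. Second, for the remaining computations the number of distinct values of $J^A(n)$ is at most one more than the number of times a number below the use of $J^A(n)$ is enumerated into $A$; I bound this by a cost-function argument, charging weight $2^{-n}$ each time the computation $J^A(n)$ is injured and keeping the total weight spent below a fixed constant $c$. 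A total weight bound of $\le c$ gives at most $c\cdot 2^{n}$ injuries to $J^A(n)$, hence a computable trace bound $g(n)$, which is exactly jump traceability by $\ES$.

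The one real tension, and the step I expect to be the main obstacle, is that the coding side demands that $A$ change infinitely often for every infinite $W_e$, whereas the tracing side demands that each individual computation $J^A(n)$ be injured only computably often. The reconciliation is to always choose the coding location $x_e$ large enough that enumerating it is cheap: at the $k$-th action of $C_e$ I would compute the finitely many current uses of $J^{A_s}(m)$ for $m\le M$ with $M>e+k$, pick $x_e$ above all of them, and set the use of $\Phi_{n(e)}$ higher still, so that the only computations injured have index $>M$ and the cost of the action is below $2^{-e-k}$. Summing over all $e$ and $k$ keeps the total cost finite while still permitting each $C_e$ to act as often as $W_e$ requires. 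Verifying that this choice is always available and effective at each stage, i.e.\ that controlling the uses of the coding machines buys enough room below them to keep costs summable, is the crux; granting it, the two families of requirements coexist and produce a set that is at once superhigh and jump traceable.
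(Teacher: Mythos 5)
Your construction cannot succeed, because you have committed to building the witness as a c.e.\ set, and no c.e.\ set --- indeed no $\Delta^0_2$ set --- is both superhigh and jump traceable. This follows from facts quoted in this very paper: by Cole and Simpson, superhighness coincides with JT-hardness on the $\Delta^0_2$ sets, so your superhigh c.e.\ set $A$ would satisfy $\ES' \le_{JT} A$; if $A$ were also jump traceable, i.e.\ $A \le_{JT} \ES$, transitivity of $\le_{JT}$ would give $\ES' \le_{JT} \ES$, which is false (this is exactly how the paper proves that no jump traceable degree is JT-hard). Sharper and specific to your setting: Nies proved that jump traceability and superlowness coincide on the c.e.\ sets (see Section 8.4 of the Nies book cited in the paper), and a superlow set satisfies $A' \le_{tt} \ES'$, which together with superhighness $\ES'' \le_{tt} A'$ would give $\ES'' \le_{tt} \ES'$, contradicting $\ES'' \not\le_T \ES'$. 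So any witness to the theorem is necessarily non-$\Delta^0_2$; the paper's witness is computable in $\ES''$, not c.e.

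This impossibility surfaces exactly at the step you yourself flagged as the crux, and it is not a removable technicality. When $W_e$ receives a new element, the coding requirement is \emph{forced} to enumerate the marker $x_e$ chosen at its previous action, since only numbers below the previously set use of $\Phi_{n(e)}$ destroy the existing computation $J^A(n(e))$; enumerating a freshly chosen large number instead would leave that computation intact, and $J^A(n(e))$ would converge even though $W_e$ is infinite, ruining the reduction $\mathrm{Fin}\le_m A'$. But the cost of this forced enumeration is incurred at the stage of enumeration, not at the stage $x_e$ was chosen: in the interim, the computations $J^{A_s}(m)$ for small $m$ --- which you do not control --- can become convergent again with uses above $x_e$ (for instance after being injured by other requirements and re-converging with larger use), so the forced action can be arbitrarily expensive; and the usual cost-function escape, cancelling the candidate and picking a cheaper one, is unavailable because cancelling abandons the obligation to destroy $J^A(n(e))$. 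The paper sidesteps all of this by abandoning c.e.-ness: it runs the Friedberg--Mohrherr finite-extension jump inversion relative to the oracle $\ES''$, and jump traceability comes for free because the value of $J^B(e)$ is determined by the at most $2e$ binary oracle answers used before stage $2e$, so an oracle-free search enumerates a trace of at most $4^e$ candidate values, uniformly in $e$.
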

\begin{proof}
Mohrherr \cite{Mohrherr}
proves a jump inversion theorem in the tt-degrees: For each set $A$, if $\ES'\le_{tt} A$, then there
exists a set $B$ such that $B'\equiv_{tt} A$.
To produce $B$, Mohrherr uses the same construction as in the proof of Friedberg's Jump Inversion Theorem for the Turing degrees. Namely, $B$ is constructed by finite extensions $B[s]\preceq B[s+1]\preceq\cdots$ Here $B[s]$ is a finite binary string and $\sigma\preceq\tau$ denotes that $\sigma$ is an initial substring of $\tau$. At stages of the form $s=2e$ (even stages), one searches for an extension $B[s+1]$ of $B[s]$ such that $J^{B[s+1]}(e)\downarrow$. If none is found one lets $B[s+1]=B[s]$. At stages of the form $s=2e+1$ (odd stages) one appends the bit $A(e)$, i.e. one lets $B[s+1]=B[s]^\frown\la A(e)\ra$. Thus  two types of oracle questions are asked alternately for varying numbers $e$: 

\begin{enumerate}
\item[(1)] Does a string $\sigma\succeq B[s]$ exist so that $J^\sigma(e)\downarrow$, i.e. $B\succeq \sigma$ implies $e\in B'$? (If so, let $B[s+1]$ be the first such string that is found.)

\item[(2)] Is $A(e)=1$?
\end{enumerate}
This allows for a jump trace $V_e$ of size at most $4^{e}$. First, $V_0$ consists of at most one value, namely the first value $J^\sigma(e)$ found for any $\sigma$ extending the empty string. Next, $V_1$ consists of the first value for $\Phi^\tau_1(1)$ found for any $\tau$ extending $\la 0\ra$, $\la 1\ra$, $\sigma^\frown \la 0\ra$, $\sigma^\frown\la 1\ra$, respectively, in the cases: $0\not\in A$, and $0\not\in B'$; $0\in A$ and $0\not\in B'$; $0\not\in A$ and $0\in B'$; and $0\in A$ and $0\in B'$.
 Generally, for each $e$ there are four possibilities: either $e$ is in $A$ or not, and either the extension $\sigma$ of $B[s]$ is found or not.
$V_e$ consists of all the possible values of $J^B(e)$ depending on the answers to these questions.

Hence $B$ is jump traceable, no matter what oracle $A$ is used. Thus,  letting $A=\ES''$ results in  a superhigh jump-traceable set $B$.
\end{proof}

\begin{question}
Is there  a superhigh set of  minimal Turing degree?
\end{question}

This question is sharp in terms of the notions (1)--(5) of Theorem \ref{consider}: minimal Turing degrees can be high (Cooper \cite{Cooper}) but not JT-hard (Barmpalias \cite{B}). 

Cole and Simpson \cite{CS} introduced the following notion. Let $A$ be a Turing oracle. A function $f \colon \,  \omega\rightarrow\omega$ is \emph{boundedly
limit computable by  $A$} if there exist an $A$-computable   function $\tilde f :
\omega\times\omega\rightarrow\omega$ such that  $\lim_s^{\text{\rm comp}} \tilde f(n, s) = f(n)$.

We write
\[\BLR(A) = \{f \in \omega^\omega\,  |\, f \text{ is boundedly limit computable by  }A\}.\]
We say that  $X \le_{BLR} Y $ if $\BLR(X)\subseteq \BLR(Y)$. In particular,   $A$ is \emph{\BLR-hard} if $\BLR(\ES')\subseteq \BLR(A)$.

It is easy to see that $\le_{BLR}$ implies $\le_{JT}$ (Lemma 6.8 of Cole and Simpson \cite{CS}). The following partial converse  is implicit in some recent papers  as pointed out to the   authors by Simpson.

\begin{thm}  Suppose that  $A\le_{JT}B$ where $A$ is a c.e.\ set and $B$ is any set.  Then \BLR$(A)\subseteq$\BLR$(B)$.
\end{thm}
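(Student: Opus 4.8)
The plan is to prove the statement in the form $\BLR(A)\subseteq\BLR(B)$ (i.e.\ $A\le_{BLR}B$). Write the jump trace witnessing $A\le_{JT}B$ via computable functions $p,q$, so that $J^A(m)\in W^B_{p(m)}$ and $|W^B_{p(m)}|\le q(m)$ whenever $J^A(m)\da$. The basic mechanism is that \emph{any} total $A$-computable function can be traced by $B$ with bounded cardinality: given such a $\Theta^A$, the $s$-$m$-$n$ theorem yields a computable $d$ with $\Phi^A_{d(n)}(x)=\Theta^A(n)$ for all $x$, hence $J^A(d(n))=\Phi^A_{d(n)}(d(n))=\Theta^A(n)\da$, so $B$ enumerates $V(n):=W^B_{p(d(n))}$ with $\Theta^A(n)\in V(n)$ and $|V(n)|\le q(d(n))$. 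I would first upgrade this to the statement that every total $A$-computable function lies in $\BLR(B)$, and then lift the result to an arbitrary $f\in\BLR(A)$.

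The upgrade is the heart of the argument, and is the only place the hypothesis that $A$ is c.e.\ is used. Fix a total $A$-computable $\Theta^A$ and the index $d(n)$ above. Using the oracle $A$ I compute the use $\rho(n)$ of the halting computation $\Phi^A_{d(n)}(d(n))$, and then the least stage $m^A(n)$ by which this computation has halted \emph{and} $A_{m^A(n)}\restriction\rho(n)=A\restriction\rho(n)$; since $A$ is c.e.\ the finitely many elements of $A$ below $\rho(n)$ are eventually enumerated, so this search halts and $m^A$ is total $A$-computable. The point of c.e.-ness is monotonicity: for \emph{every} stage $c\ge m^A(n)$ the computable approximation $\Theta_c(n):=\Phi^{A_c}_{d(n)}(d(n))[c]$ already equals $\Theta^A(n)$, because the oracle only grows and the computation queries only below $\rho(n)$, where $A_c$ is already correct. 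Now trace $m^A$ (via a computable $d'$ with $J^A(d'(n))=m^A(n)$): $B$ enumerates a finite set $T(n)\ni m^A(n)$ with $|T(n)|\le q(d'(n))$, and I set $\widehat\Theta(n,v):=\Theta_{c(n,v)}(n)$, where $c(n,v):=\max T(n)[v]$ is the largest trace element enumerated by stage $v$. Since $c(n,v)$ is nondecreasing with limit at least $m^A(n)$, we get $\lim_v\widehat\Theta(n,v)=\Theta^A(n)$, and $\widehat\Theta(n,\cdot)$ changes only when the trace-maximum strictly increases, hence at most $q(d'(n))$ times. Thus $\Theta^A\in\BLR(B)$ with the computable change bound $q(d'(n))$.

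Finally I would apply this with $\Theta^A(\langle n,s\rangle):=\tilde f(n,s)$, the $A$-computable witness for $f\in\BLR(A)$, obtaining a $B$-computable approximation of each $\tilde f(n,s)$ with a computable change bound, and combine these with the \emph{computable} outer bound $g(n)$ on the number of $s$-changes of $\tilde f(n,\cdot)$ to approximate $f(n)=\lim_s\tilde f(n,s)$. I expect this combination to be the main obstacle. The difficulty is that the outer settling stage $s^\ast(n)=\mu s\,(\forall s'\ge s)\,\tilde f(n,s')=\tilde f(n,s)$ is \emph{not} $A$-computable: were it so, $f(n)=\tilde f(n,s^\ast(n))$ would be $A$-computable, collapsing $\BLR(A)$ to the $A$-computable functions. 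Hence the outer limit cannot be absorbed into the first part of the argument. Instead I would trace the \emph{partial} $A$-computable functions $\sigma_k(n)$ giving the stage of the $k$-th change of $\tilde f(n,\cdot)$, for the finitely many $k<g(n)$; their largest defined value locates $s^\ast(n)$, so running the inner approximation at the maximum of all these (boundedly many) traces converges to $f(n)$. The delicate part, which I expect to require the most care, is that traces of the \emph{partial} $\sigma_k$ may contain spurious entries; one must argue, from the bounded cardinality of every trace together with the fact that the inner approximation is recomputed at the current trace-maximum, that the resulting $B$-approximation of $f$ still changes only computably-boundedly often in $n$. This would give $f\in\BLR(B)$, and hence $\BLR(A)\subseteq\BLR(B)$.
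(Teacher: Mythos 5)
Your first two steps are correct. The tracing of total $A$-computable functions via $s$-$m$-$n$ is standard, and your ``upgrade'' is a correct, self-contained proof that every \emph{total} $A$-computable function lies in $\BLR(B)$; it is essentially the true-stage argument that the paper imports wholesale by citing Simpson's Remark 8.7. Note exactly why it works: for fixed $n$, your approximation $\widehat\Theta(n,v)=\Theta_{c(n,v)}(n)$ changes only when the trace maximum $c(n,v)$ increases, because for a \emph{fixed} stage $c$ the value $\Theta_c(n)$ is exactly computable, not itself approximated. The gap is in your final step, precisely at the point you flagged, and it cannot be closed for the construction you describe. There you set $\widehat f(n,v)=\widehat\Theta(\langle n, M(n,v)\rangle, v)$, where $M(n,v)$ is the running maximum of the traces of the $\sigma_k(n)$, $k<g(n)$; now the inner object is itself a $v$-approximation, so $\widehat f(n,\cdot)$ changes both when $M(n,\cdot)$ moves (computably often --- fine) and, for each value $M$ that $M(n,\cdot)$ assumes, up to $q(d'(\langle n,M\rangle))$ further times while $M(n,v)=M$. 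The trace cardinality bounds control how many values $M(n,\cdot)$ assumes, but not their \emph{magnitude}: spurious trace entries can be arbitrarily large, and the inner change bound $q(d'(\langle n,M\rangle))$ grows with $M$. Hence the total number of changes of $\widehat f(n,\cdot)$ is finite but admits no bound computable in $n$, and membership in $\BLR(B)$ requires exactly such a bound. As it stands you have only shown that $f$ is limit computable in $B$ with finitely many changes, which is weaker than $f\in\BLR(B)$.

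The missing idea is to exploit c.e.-ness for \emph{partial} $A$-computable functions, not only total ones, so that the outer limit can be resolved by queries at \emph{computably bounded} arguments. Trace the partial $A$-computable function $e\mapsto\langle J^A(e),\ \text{putative true stage}\rangle$; since $A$ is c.e., the condition ``stage $s$ is still correct on the use'' is co-c.e., so each trace element is invalidated at most once, and an element that is never invalidated must arise from an $A$-correct computation and hence carries the true value of $J^A(e)$. Taking at each stage the currently valid element with largest putative true stage (output $0$ if none remains) shows that the totalization $h(e)=J^A(e)+1$ if $J^A(e)\downarrow$, $h(e)=0$ otherwise, is in $\BLR(B)$ with roughly $2q(e)$ changes; this is Simpson's Remark 8.7, which the paper cites rather than proves. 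Composing with $s$-$m$-$n$, the totalization $G$ of \emph{any} partial $A$-computable function is in $\BLR(B)$. Now apply this to $\psi(\langle n,k\rangle)=$ the value of $\tilde f(n,\cdot)$ after its $k$-th change: then $f(n)=G(\langle n,k_n\rangle)-1$ where $k_n=\max\{k<g(n)\colon G(\langle n,k\rangle)\neq 0\}$, and the evident $B$-computable approximation changes at most $\sum_{k<g(n)}(\text{change bound of } G \text{ at } \langle n,k\rangle)$ times, which \emph{is} computable in $n$ because $k$ ranges below $g(n)$ --- in contrast to your unbounded stage arguments $M$. This combination step is Lemma 2.5 of Cole--Simpson, the second result the paper cites; your proposal in effect has to reprove it, and the reproof requires the switch from total to partial functions just described.
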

\begin{proof}
Since $A \le_{JT} B$, by Remark 8.7 of Simpson \cite{SimpsonSuper}, the function $h$ given by 
\bc $h(e)=J^A(e)+1$ if $J^A(e)\downarrow$, $h(e)=0$ otherwise, \ec
is $B'$-computable, with computably bounded use of $B'$ and unbounded use of $B$. This implies that $h$ is \BLR$(B)$. Let $\psi^{A}$ be any function partial computable in $A$. Let $g$ be defined by \bc   $g(n)=\psi^{A}(n)+1$ if $\psi^A(n)\downarrow$,  $g(n)=0$ otherwise. \ec
 Letting $f$ be a computable function with $\psi^A(n)\simeq J(f(n))$ for all $n$, we can use the $B$-computable approximation to $h$  with a computably bounded number of changes  to get such an approximation to $g$. So $g$ is \BLR$(B)$. By Lemma 2.5 of Cole and Simpson \cite{CS}, it follows that \BLR$(A)\subseteq$\BLR$(B)$. 
\end{proof}

\begin{cor}  For c.e.\ sets $A, B$ we have $A \le_{JT} B \leftrightarrow A \le_{BLR} B$. \end{cor}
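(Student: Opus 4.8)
The plan is to obtain both directions of the biconditional by combining the theorem just proved with the easy implication already recorded in this section; no genuinely new argument is needed, and the content lies entirely in assembling the two facts and checking that their hypotheses are met for c.e.\ $A$ and $B$.

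For the forward implication $A \le_{JT} B \Rightarrow A \le_{BLR} B$, I would invoke the preceding theorem directly. Since $A$ is a c.e.\ set and $B$ is an arbitrary set, the hypothesis of that theorem is precisely our assumption $A \le_{JT} B$, and its conclusion $\BLR(A) \subseteq \BLR(B)$ is, by definition of $\le_{BLR}$, exactly the assertion $A \le_{BLR} B$. Thus this direction is immediate. For the converse $A \le_{BLR} B \Rightarrow A \le_{JT} B$, I would appeal to the fact noted earlier that $\le_{BLR}$ always implies $\le_{JT}$ (Lemma 6.8 of Cole and Simpson \cite{CS}); this holds for arbitrary oracles, and in particular for our c.e.\ $A, B$, so no additional hypothesis is required.

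The only place where the c.e.\ assumption does any work is the forward direction, and that work has already been discharged inside the preceding theorem: the derivation that $A \le_{JT} B$ yields $\BLR(A)\subseteq\BLR(B)$ relies on $A$ being c.e., so that the function $h$ encoding $J^A$ is $B'$-computable with computably bounded use of $B'$. The backward direction uses nothing about enumerability. Consequently the main (indeed the only) obstacle has already been overcome, and the corollary follows simply by conjoining the two implications. I do not expect any hidden difficulty; the one point to state explicitly is that $\le_{BLR}$ is defined as containment $\BLR(X)\subseteq\BLR(Y)$, so that the theorem's conclusion and the relation $\le_{BLR}$ are literally the same statement.
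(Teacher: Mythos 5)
Your proposal is correct and matches the paper's intended argument exactly: the corollary is stated without a separate proof precisely because it follows by combining the preceding theorem (for the direction $A \le_{JT} B \Rightarrow A \le_{BLR} B$, using that $A$ is c.e.) with the already-cited Lemma 6.8 of Cole and Simpson (for the converse, which needs no enumerability assumption). Your observation about where the c.e.\ hypothesis is actually used is accurate and consistent with the paper.
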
 

\begin{cor}\label{coincides}  JT-hardness coincides with \BLR-hardness: for all $B$, \\ $\ES'\le_{JT}B \leftrightarrow \ES'\le_{BLR} B$. \end{cor}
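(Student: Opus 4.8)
The plan is to derive Corollary \ref{coincides} directly from the preceding Theorem together with the corollary that precedes it, since JT-hardness and \BLR-hardness are precisely the special cases of $\le_{JT}$ and $\le_{BLR}$ in which the left-hand set is $\ES'$. The key observation is that $\ES'$ is itself a c.e.\ set, so the general equivalence $A\le_{JT}B \leftrightarrow A\le_{BLR}B$ for c.e.\ $A$ (the immediately preceding corollary) applies with $A=\ES'$.

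First I would recall that by definition $\ES'\le_{JT}B$ is exactly the statement that $B$ is JT-hard, and $\ES'\le_{BLR}B$ (i.e.\ $\BLR(\ES')\subseteq\BLR(B)$) is exactly the statement that $B$ is \BLR-hard. Next I would instantiate the preceding corollary at $A=\ES'$, which is legitimate because $\ES'$ is c.e., obtaining $\ES'\le_{JT}B \leftrightarrow \ES'\le_{BLR}B$ for every $B$. Unwinding the two definitions then yields the desired biconditional, establishing that the two hardness notions coincide.

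There is essentially no obstacle here: the statement is a routine specialization of the general c.e.\ equivalence, and all the substantive work, the forward direction via the limit approximation to $h$ in the Theorem and the reverse direction via Lemma 6.8 of Cole and Simpson, has already been carried out. The only point deserving a word of care is confirming that nothing in the preceding corollary requires $B$ to be c.e.: since we only use the c.e.\ hypothesis on the left-hand set $A=\ES'$, the claim holds for arbitrary $B$, which is exactly what the corollary asserts by quantifying over all $B$.
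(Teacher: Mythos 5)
Your proposal is correct and takes essentially the same route as the paper, which treats the corollary as immediate: one direction is the Theorem applied with $A=\ES'$ (legitimate since $\ES'$ is c.e., and the Theorem allows arbitrary $B$), and the other is Cole--Simpson's Lemma 6.8 that $\le_{BLR}$ implies $\le_{JT}$ for arbitrary sets. You also correctly handle the one potential pitfall, namely that the intermediate corollary is stated for c.e.\ $B$, by observing that the c.e.\ hypothesis is only ever needed on the left-hand set.
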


By Corollary \ref{coincides} and Theorem \ref{consider}((3)$\Implies$(4)),  \BLR-hardness implies superhighness. Cole and Simpson asked \cite[Remark 6.21]{CS} whether conversely superhighness implies   \BLR-hardness. Our negative answer is immediate from Corollary \ref{coincides} and Theorem \ref{consider}((4)$\not\Implies$(3)).

\section{Superhighness,   randomness, and $K$-triviality}

We study the class $\Shigh^\Diamond$ of c.e.\ sets that are Turing below all ML-random superhigh sets.  First we show that this class contains a promptly simple set.  

For background on diagonally non-computable functions and sets of PA degree see \cite[Ch 4]{Nies:book}. Let $\lambda$ denote the usual fair-coin Lebesgue measure on $2^\N$; a null class is a set $\mathcal S\subseteq 2^\N$ with $\lambda(S)=0$.
 
\begin{fact}[Jockusch and Soare \cite{JS}]  \label{ex: PA small}  The sets of PA degree  form a null class. \end{fact}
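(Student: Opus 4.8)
The plan is to show that the collection of reals of PA degree is a null class by exploiting the characterization of PA degree in terms of computing a $\{0,1\}$-valued diagonally non-computable function, together with a first Borel--Cantelli style measure estimate. Recall that a set $X$ has PA degree iff $X$ computes a function $h$ that is $\DNC$ relative to nothing, equivalently iff $X$ computes a $\{0,1\}$-valued (i.e.\ $0,1$-valued) total function $g$ avoiding every partial computable $\{0,1\}$-valued function, i.e.\ $g(n) \ne \Phi_n(n)$ whenever $\Phi_n(n)\da \in \{0,1\}$. So it suffices to bound the measure of the class of oracles computing such a separating function.

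First I would fix a universal functional and stratify: for each index $e$ of a Turing functional $\Phi_e$, consider the class
\[
\mathcal S_e = \{X : \Phi_e^X \text{ is total, } \{0,1\}\text{-valued, and } \Phi_e^X(n) \ne \Phi_n(n) \text{ whenever } \Phi_n(n)\da \in \{0,1\}\}.
\]
Since every PA oracle computes such a function via some functional, the class of PA oracles is contained in $\bigcup_e \mathcal S_e$, a countable union, so it is enough to prove $\lambda(\mathcal S_e) = 0$ for each fixed $e$. Next I would set up the diagonalization-against-measure argument: I build, by the recursion theorem, a partial computable $\{0,1\}$-valued function that forces the measure of oracles still in the game to shrink. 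Concretely, I would run the following recursive procedure. At each step I have a number $n$ not yet diagonalized and a lower bound on the measure of oracles $X$ with $\Phi_e^X(n)\da \in \{0,1\}$; I wait until this measure exceeds, say, $3/4$ of the current "live" measure, split the live oracles into the two clopen pieces according to the value $\Phi_e^X(n)$, and then define $\Phi_n(n)$ to be the value taken on the \emph{larger} piece. This kills off at least half of the live measure at each such stage, because any $X \in \mathcal S_e$ must have $\Phi_e^X(n) \ne \Phi_n(n)$, hence lies in the smaller piece.

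The key measure bookkeeping is that iterating this over fresh values $n_0 < n_1 < \cdots$ forces the surviving measure below $(1/2)^k$ after $k$ successful stages, so $\lambda(\mathcal S_e)=0$ provided infinitely many stages succeed. Success of a stage requires that the measure of oracles with $\Phi_e^X(n)\da$ taking a $\{0,1\}$-value eventually exceeds the threshold; but this is guaranteed precisely on the oracles in $\mathcal S_e$, where $\Phi_e^X$ is total and $\{0,1\}$-valued, so up to a null set every surviving oracle does eventually halt with a legal value, and a standard Lebesgue density / compactness argument supplies the needed clopen approximations to carry out the split effectively. The main obstacle I anticipate is the effectivity and self-reference: the procedure must define $\Phi_n(n)$ while simultaneously monitoring $\Phi_e^X$, so I need to apply the recursion theorem to fix the indices $n$ in advance and to ensure the whole construction is uniformly computable; care is also needed so that defining $\Phi_n(n)$ on the larger piece is done using only a finite, effectively found clopen approximation to the measure (since exact measures of $\Sigma^0_1$ classes are only lower-semicomputable, I approximate from below and act once the approximation crosses the threshold). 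Once these effectivity issues are handled, the measure estimate is routine and yields $\lambda(\mathcal S_e)=0$, completing the proof.
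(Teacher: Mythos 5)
Your reduction to showing $\lambda(\mathcal S_e)=0$ for each fixed $e$ is fine, and the recursion-theorem diagonalization is a legitimate device, but the construction as you describe it has a genuine gap at its crux: the waiting condition may never be met, and your justification for why it is met conflates the live set with $\mathcal S_e$. After some pieces have been removed, the live set $L_k$ is just a clopen set containing $\mathcal S_e\cap L_0$; in general it also contains many oracles $X$ on which $\Phi_e^X(n)$ diverges or takes a value outside $\{0,1\}$. Your claim that ``up to a null set every surviving oracle does eventually halt with a legal value'' is exactly the assertion $\lambda\bigl(L_k\setminus\{X:\Phi_e^X(n)\downarrow\in\{0,1\}\}\bigr)=0$, and nothing in the construction supports it; totality on $\mathcal S_e$ only gives $\mathcal S_e\cap L_k\subseteq\{X\in L_k:\Phi_e^X(n)\downarrow\in\{0,1\}\}$. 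If $\lambda(\mathcal S_e\cap L_k)$ is small compared to $\lambda(L_k)$ --- which is precisely what you expect when $\mathcal S_e$ is null --- the observed converged measure can stay below $\tfrac34\lambda(L_k)$ forever, the procedure stalls at stage $k$, and all you can conclude is $\lambda(\mathcal S_e)\le\tfrac34\lambda(L_k)$, a bound that does not tend to $0$. So the direct, unconditional construction does not establish nullity.

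The repair is to argue by contradiction and to use Lebesgue density to guarantee the threshold, not merely for ``effectivity of clopen approximations.'' If $\lambda(\mathcal S_e)>0$, density gives a string $\sigma$ with $\lambda(\mathcal S_e\cap[\sigma])>\tfrac78\,2^{-|\sigma|}$; starting with $L_0=[\sigma]$, the invariant $\mathcal S_e\cap[\sigma]\subseteq L_k$ together with $\lambda(\mathcal S_e\cap[\sigma])>\tfrac78\lambda([\sigma])\ge\tfrac78\lambda(L_k)$ shows that every stage's threshold is eventually met, each killed piece misses $\mathcal S_e$, and $\lambda(L_k)\to 0$ contradicts $\lambda(\mathcal S_e\cap[\sigma])>0$. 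Once restructured this way, your proof is essentially the paper's: the paper assumes non-nullity, uses the zero-one law and density to obtain a single functional $\Phi$ with $\lambda\{Z:\Phi^Z\text{ total, }\{0,1\}\text{-valued, d.n.c.}\}\ge\tfrac34$, and then --- instead of the recursion theorem --- defines a partial computable $f$ by majority vote ($f(n)$ is the first value observed on a set of measure $>\tfrac14$), shows $f$ is total and d.n.c., and contradicts the nonexistence of computable d.n.c.\ functions. The majority-vote route is simpler than yours in that it needs no recursion theorem and no iterated shrinking of a live set. (Your bookkeeping constant is also slightly off: the larger piece is only guaranteed measure $\tfrac38\lambda(L_k)$, so the survival factor is $\tfrac58$ rather than $\tfrac12$, but that is immaterial.)
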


\begin{proof} 
 Otherwise by the zero-one law  the class is conull. So  by the Lebesgue Density Theorem   there is a Turing functional $\Phi$  such that $\Phi^X(w) \in \twoset$ if defined,  and $$\{Z\colon \, \Phi^Z \ttext{is total and diagonally non-computable}\}$$ has measure at least $3/4$. 
 
 Let the partial computable function $f$ be defined by: $f(n)$ is the value $i\in\{0,1\}$ such that for the smallest possible stage $s$, we observe by stage $s$ that $\Phi^Z(n)=i$ for a set of $Z$s of measure strictly more than $1/4$. For each $n$, such an $i$ and stage $s$ must exist. Indeed, if for some $n$ and both $i\in\{0,1\}$ there is no such $s$, then $\Phi^Z(n)$ is defined for a set of $Z$s of measure at most $\frac{1}{4}+\frac{1}{4}=\frac{1}{2}\not\ge\frac{3}{4}$, which is a contradiction. Moreover, we cannot have $f(n)=J(n)$ for any $n$, because this would imply that there is a set of $Z$s of measure strictly more than $1/4$ for which $\Phi^Z$ is not a total d.n.c.\ function. Thus $f$ is a computable d.n.c.\ function, which is a contradiction.
 \end{proof}

\begin{thm}[Simpson]\label{1}
The class $\Shigh $ of superhigh sets is contained in  a $\Sigma^0_3$ null class.
\end{thm}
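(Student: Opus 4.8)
The statement has two independent halves: a complexity bound (the class lies inside some $\SI{3}$ class) and a measure bound (that class is null). The measure half is routine; the real work is the complexity bound, and I would not attempt it on the definition of superhighness directly.

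\textbf{Why the direct count fails.} Reading $A\in\Shigh$ as $\exists e\,\forall n\,[\Gamma_e^{A'}(n)=\ES''(n)]$, with $\Gamma_e$ ranging over truth-table functionals, gives only a $\SI{4}$ bound. A tt-functional is total with a computable use bound, so for fixed $e$ the bit $\Gamma_e^{A'}(n)$ has an $A$-computable approximation with a computably bounded number of mind-changes; thus ``$\Gamma_e^{A'}(n)=i$'' is $\DII$ in $A$. But $\ES''$ is \emph{properly} $\SI{2}$, and forcing the match on the complement --- the condition $n\notin\ES''\Implies\Gamma_e^{A'}(n)=0$, for every $n$ --- makes the matrix $\PI{3}$, so the leading $\exists e$ only yields $\SI{4}$.

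\textbf{The complexity bound.} The escape is to replace exact tt-correctness by a bound on relative prefix-free complexity. Superhighness is $LR$-hardness, $\ES''\ltt A'\Leftrightarrow \ES'\le_{LR}A$, and $\le_{LR}$ coincides with $\le_{LK}$ (Kjos-Hanssen--Miller--Solomon), so
\[
A\in\Shigh \ \Longleftrightarrow\ \exists c\,\forall n\ \K^A(n)\le \K^{\ES'}(n)+c .
\]
Now the quantifiers collapse: ``$\K^{\ES'}(n)\le k$'' is $\SI{2}$ (a short program halts on the c.e.\ oracle $\ES'$) and ``$\K^A(n)\le k+c$'' is $\SI{1}$ in $A$, so rewriting the inner condition as $\forall n\,\forall k\,[\K^{\ES'}(n)\le k\Implies \K^A(n)\le k+c]$ presents it as a $\PI{2}$ predicate, and the single outer $\exists c$ makes the whole class $\SI{3}$. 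Securing and citing this $LR/LK$ reformulation is the crux of the argument: it is exactly the passage from the intractable ``$\Gamma_e^{A'}=\ES''$'' to a condition with one existential number quantifier over a $\PI{2}$ matrix.

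\textbf{The measure bound.} It is enough to see that the wider class of high sets is null, since superhigh sets are high (Theorem \ref{consider}, (4)$\Implies$(5)). Almost every set is $\GL{1}$ (every $2$-random set is, and these are conull), i.e.\ satisfies $A'\equiv_T A\oplus\ES'$; and by the relativization to $\ES'$ of Sacks' theorem that the cone above a noncomputable set is null, $\{A:\ES''\leT A\oplus\ES'\}$ is null. For every $A$ in the conull set where both hold we get $A'\equiv_T A\oplus\ES'\notT\ES''$, so $A$ is not high. Hence the high sets, and with them the superhigh sets, are null, and $\{A:\exists c\,\forall n\,\K^A(n)\le\K^{\ES'}(n)+c\}$ is the desired $\SI{3}$ null class.
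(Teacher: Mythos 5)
Your proposal has a fatal gap: the claimed equivalence $\ES''\ltt A' \Leftrightarrow \ES'\le_{LR}A$ is false, and it fails in exactly the direction your proof needs. What is true (Simpson, fed by the Kjos-Hanssen--Miller--Solomon coincidence of $\le_{LR}$ and $\le_{LK}$) is the single implication: $\ES'\le_{LR}A$ implies that $A$ is superhigh. The converse fails, and this paper itself is part of the reason we know it fails: $\ES'\le_{LR}A$ is equivalent to almost everywhere domination, which is property (2) of Theorem \ref{consider}, while superhighness is property (4), and Theorem \ref{consider} shows (2)$\Implies$(3)$\Implies$(4) with \emph{both} implications strict (e.g.\ the jump-traceable superhigh set of Theorem \ref{kjosEmail} is superhigh but not even JT-hard, let alone $LR$-hard). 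Consequently your class $\{A:\exists c\,\forall n\, \K^A(n)\le \K^{\ES'}(n)+c\}$ is a \emph{proper subclass} of $\Shigh$, not a superclass. It is indeed $\SI{3}$ and null, and your quantifier count and your measure argument (highness is null via $\GL{1}$ conull plus relativized Sacks) are both correct --- but the containment $\Shigh\subseteq$ (your class), which is the entire content of the theorem, does not hold. Your substitution goes the wrong way: to preserve containment you must replace superhighness by a \emph{weaker} $\SI{3}$ condition, whereas you replaced it by a strictly stronger one.

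For comparison, the paper's proof performs exactly the weakening that your argument needed. Let $P$ be the nonempty $\PPI(\ES')$ class of $\twoset$-valued functions d.n.c.\ relative to $\ES'$, and let $\HH=\{Z\colon \ex f\ltt Z'\,[f\in P]\}$. If $Z$ is superhigh then $Z'$ tt-computes $\ES''$, which in turn tt-computes some member of $P$, so $\Shigh\subseteq\HH$; this is a genuine weakening because $Z'$ need only compute \emph{some} d.n.c.$(\ES')$ function rather than all of $\ES''$. Nullity of $\HH$ comes from Fact \ref{ex: PA small} relativized to $\ES'$ together with the conullness of $\GL{1}$ (the same ingredients as your measure paragraph). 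The $\SI{3}$ bound exploits the fact that a truth-table reduction $\Psi_e$ applied to $Z'$ admits a $Z$-computable, everywhere-total approximation $\Psi_e^{Z'}\uhr{x}[s]$, so that ``$\Psi_e(Z')\in P$'' becomes a $\PI{2}$ condition on $Z$; this plays precisely the quantifier-collapsing role you wanted the $LK$-characterization to play, but for a class that really does contain $\Shigh$.
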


\begin{proof} A function $f$ is called diagonally non-computable (d.n.c.) \ relative to $\ES'$ if $\fao x \neg f(x) = J^{\ES'} \! (x)$.   Let~$P$ be the $\PPI(\ES')$ class of  $\twoset$-valued functions that are d.n.c.\ relative to $\ES'$. By Fact~\ref{ex: PA small} relative to $\ES'$, the class $\{Z \colon \, \ex f \leT Z \oplus \ES' \,  [ f\in P]\}$ is null. Then,  since $\GL 1$ is conull, the class 

\bc $\HH = \{Z \colon \, \ex f \ltt Z' \,  [ f\in P]\}$ \ec is also null. This class clearly contains $\Shigh$. 

To show that $\HH$ is $\SI 3$, fix  a $\PI 2$ relation~$R\sub \NN^3$ such that a string  $\sss $ is extended by a member of $P$ iff  $\fao u \exo v  R(\sss, u, v)$. Let $(\Psi_e)\sN e$ be an effective listing of truth-table reduction procedures. It suffices to show that   $\{Z: \Psi_e(Z') \in P\}$ is a $\PI 2$ class. To this end, note that  

\n  \hspace{1cm}  $\Psi_e(Z') \in P \leftrightarrow \fao x \fao t \fao u \ex s> t  \exo v  R(\Psi_e^{Z'} \uhr x [s],u,v)$. 
 \end{proof}
 
A direct construction of a $\Sigma^0_3$ null class containing $\Shigh$ appears in Nies \cite{Nies:nd}. 

\begin{question} Is $\Shigh$ itself a $\Sigma^0_3$ class?
\end{question}

\begin{cor}\label{22} 
There is no superhigh weakly 2-random set.
\end{cor}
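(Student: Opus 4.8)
The plan is to derive Corollary~\ref{22} from Theorem~\ref{1} by using the characterization of weak 2-randomness in terms of avoiding $\Sigma^0_2$ (equivalently $\Pi^0_2$ null classes, or more precisely $G_\delta$ null classes). The key fact I would invoke is that a set $Z$ is weakly 2-random if and only if $Z$ is not contained in any null $\Pi^0_2$ class (equivalently, $Z$ passes every generalized Martin-L\"of test whose components are $\Sigma^0_1$ but whose intersection is null). So the obstacle is that Theorem~\ref{1} only gives a $\Sigma^0_3$ null class, not a $\Pi^0_2$ one, and $\Sigma^0_3$ is one level too high to apply weak 2-randomness directly.

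To bridge this gap, I would examine the structure of the class $\HH$ from the proof of Theorem~\ref{1} more carefully. The class $\HH = \{Z : \ex f \ltt Z' \, [f \in P]\}$ is a countable union over the truth-table procedures $\Psi_e$ of the classes $\CCC_e = \{Z : \Psi_e(Z') \in P\}$, and each $\CCC_e$ was shown to be $\PI 2$. A countable union of $\PI 2$ classes is $\SI 3$, which is why the bound in Theorem~\ref{1} is $\SI 3$. However, for the purpose of ruling out weakly 2-random sets, I do not need a single null $\PI 2$ class covering all of $\Shigh$; it suffices to observe that weak 2-randomness is preserved under the relevant operations, so that I can handle each $\CCC_e$ separately. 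Concretely, if $Z$ is weakly 2-random and superhigh, then $Z \in \CCC_e$ for some $e$ (since $Z \in \Shigh \subseteq \HH$). Each $\CCC_e$ is a $\PI 2$ class, so I would want to argue that it is null.

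The main step, then, is showing that each individual class $\CCC_e = \{Z : \Psi_e(Z') \in P\}$ is null. This follows from the same measure-theoretic input already used in the proof of Theorem~\ref{1}: by Fact~\ref{ex: PA small} relativized to $\ES'$, the class of $Z$ for which some $f \ltt Z'$ lies in $P$ is null, and a single truth-table procedure $\Psi_e$ contributes only the null subclass $\CCC_e$. Since $\CCC_e$ is a null $\PI 2$ class, no weakly 2-random set can belong to it. Hence a weakly 2-random superhigh set would have to avoid every $\CCC_e$, contradicting $\Shigh \subseteq \bigcup_e \CCC_e$. The hard part will be confirming that $\CCC_e$ is genuinely $\PI 2$ uniformly and that each is null as a consequence of the relativized Fact~\ref{ex: PA small}; once those two points are in hand, the definition of weak 2-randomness (no weakly 2-random set lies in a null $\PI 2$ class) closes the argument.

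\begin{remark}
One subtlety to watch is whether the relativized Fact~\ref{ex: PA small} really gives that \emph{each} $\CCC_e$ is null, rather than merely that the full union $\HH$ is null as established in Theorem~\ref{1}. But since $\CCC_e \subseteq \HH$ and $\HH$ is null, every $\CCC_e$ is automatically null as a subset of a null set, so this causes no difficulty. The only genuine content beyond Theorem~\ref{1} is the decomposition of $\HH$ into the countably many $\PI 2$ pieces $\CCC_e$ together with the standard fact that weak 2-randomness forbids membership in any null $\PI 2$ class.
\end{remark}
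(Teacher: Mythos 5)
Your proposal is correct and takes essentially the same route as the paper: the paper's proof likewise rests on the fact that a weakly 2-random set avoids every null $\Pi^0_2$ class, but it phrases the decomposition abstractly---any $\Sigma^0_3$ class is a union of $\Pi^0_2$ classes of no greater measure---so that Theorem~\ref{1} can be cited as a black box, whereas you unfold the specific decomposition $\HH = \bigcup_e \CCC_e$ from inside its proof. The mathematical content is identical; the paper's phrasing merely avoids re-entering the proof of Theorem~\ref{1}.
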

\begin{proof}
Let $R$ be a  weakly 2-random set. By definition, $R$ belongs to no $\Pi^0_2$ null class. Since a $\Sigma^0_3$ class is a union of $\Pi^0_2$ classes of no greater measure, $R$ belongs to no $\Sigma^0_3$ null class. By Theorem~\ref{1}, $R$ is not superhigh.   
\end{proof}

To put Corollary \ref{22} into context, recall that the 2-random set $\Om^{\ES'}$ is high, whereas no weakly 3-random set is high (see \cite[8.5.21]{Nies:book}).

\begin{cor}
\label{cor:PSbelow}
There is a promptly simple set Turing below all superhigh ML-random sets.
\end{cor}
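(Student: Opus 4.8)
The plan is to reduce to a general fact about null $\Sigma^0_3$ classes and then carry out a cost-function construction. By Theorem~\ref{1} the superhigh sets lie in a $\Sigma^0_3$ null class $\mathcal{S}$, and every superhigh ML-random set therefore lies in $\mathcal{S}$; so it suffices to build a promptly simple c.e.\ set $A$ with $A \leT Z$ for \emph{every} ML-random $Z \in \mathcal{S}$. First I would fix a decomposition $\mathcal{S} = \bigcup_k \mathcal{P}_k$ into $\Pi^0_2$ classes. Since each $\mathcal{P}_k \subseteq \mathcal{S}$ is null, I can write $\mathcal{P}_k = \bigcap_m \mathcal{G}_{k,m}$ with the $\mathcal{G}_{k,m}$ uniformly c.e.\ open, nested in $m$, and, after thinning, $\lambda(\mathcal{G}_{k,x}) \le 2^{-x}$.

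From this data I would define a cost function $c(x,s) = \sum_{k \le x} \lambda(\mathcal{G}_{k,x}[s])$, where $\mathcal{G}_{k,x}[s]$ is the stage-$s$ approximation. Then $c(x,s) \le (x+1)2^{-x}$, so $c$ satisfies the limit condition $\lim_x \sup_s c(x,s) = 0$; by the standard existence theorem for cost functions there is a promptly simple c.e.\ set $A$ obeying $c$, that is, with $\sum\{\,c(x,s) : x \text{ enters } A \text{ at stage } s\,\} < \infty$. In particular the total cost charged against each fixed $k$ is finite.

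The crux is to verify $A \leT Z$ for an arbitrary ML-random $Z \in \mathcal{S}$; fix $k$ with $Z \in \mathcal{P}_k$. Alongside the construction I would run a Solovay test $\mathcal{R}$: whenever some $y \le x$ enters $A$ at stage $s$, enumerate the open set $\mathcal{G}_{k,x}[s]$ into $\mathcal{R}$. Obedience to $c$ gives $\lambda(\mathcal{R}) < \infty$, so $\mathcal{R}$ is a Solovay test and $Z$ lies in only finitely many of its components. But $Z \in \mathcal{P}_k$ forces $Z \in \mathcal{G}_{k,x}$, hence $Z \in \mathcal{G}_{k,x}[s]$ for all large $s$; so if $A \restriction (x{+}1)$ changed infinitely often, $Z$ would lie in infinitely many enumerated components, contradicting randomness. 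Thus $A \restriction (x{+}1)$ stabilises, and $Z$ recovers $A(x)$.

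The main obstacle is the last step. The measure bookkeeping must be arranged so that the \emph{single} cost function $c$ simultaneously controls the tests $\mathcal{R}$ for all $k$, and, more delicately, the recovery of $A(x)$ from $Z$ must be made genuinely effective: $Z$ needs a computable criterion for recognising when no further change below $x$ can occur, rather than merely knowing that such a stage exists. This is handled by a budget argument — $Z$ measures the enumerated components it belongs to and detects when the remaining cost budget for threshold $x$ is exhausted — and it is here that the uniformity in $k$ and the nesting of the $\mathcal{G}_{k,m}$ must be set up with care, since this is precisely the point where membership of a random in the class is converted into a Turing computation.
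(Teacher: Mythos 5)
Your top-level reduction is exactly the paper's: by Theorem~\ref{1} the superhigh sets lie in a null $\Sigma^0_3$ class, and the paper then simply cites the Hirschfeldt--Miller theorem (see \cite[Thm.\ 5.3.15]{Nies:book}) --- for every null $\Sigma^0_3$ class there is a promptly simple set Turing below all ML-random members --- applied to the class $\HH$ from Theorem~\ref{1}. You instead try to reprove that theorem, which is legitimate, but your proof of it has a genuine gap at the thinning step. The assumption ``after thinning, $\lambda(\mathcal{G}_{k,x})\le 2^{-x}$'' is not merely non-effective (measures of c.e.\ open sets are only approximable from below, so no computable subsequence selection exists, and your cost function must be computable for the existence theorem to apply); it is outright impossible in the relevant case. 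If $(\mathcal{G}_{k,x})_x$ were uniformly c.e.\ with $\lambda(\mathcal{G}_{k,x})\le 2^{-x}$, that sequence would literally be a Martin-L\"of test, so $\mathcal{P}_k=\bigcap_x\mathcal{G}_{k,x}$ would contain no ML-random set at all. But superhigh ML-random sets exist ($\Omega$ is one, since every Turing-complete set is superhigh; the paper constructs incomplete ones in Theorem~\ref{thm:codeintoshighrds}), so some $\mathcal{P}_k$ in your decomposition contains ML-random sets, and for precisely that $k$ the thinning must fail. In other words, your reduction trivializes exactly the only case in which there is something to prove. Without thinning, your cost function violates the limit condition: $\sup_s c(x,s)=\sum_{k\le x}\lambda(\mathcal{G}_{k,x})$ need not tend to $0$, since the diagonal terms $\lambda(\mathcal{G}_{x,x})$ are uncontrolled.

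The repair --- and this is essentially the content of the cited proof --- is to weight rather than thin: take $c(x,s)=\sum_{k\le x}2^{-k}\lambda(\mathcal{G}_{k,x}[s])$ (up to routine monotonization). This is computable, since each $\mathcal{G}_{k,x}[s]$ is clopen; and the limit condition now holds, though \emph{non-effectively}: the tail $k\ge n$ contributes at most $2^{-n+1}$, while for each of the finitely many $k<n$ one has $\lim_x\lambda(\mathcal{G}_{k,x})=\lambda(\mathcal{P}_k)=0$ by continuity of measure along the nested sequence. Non-effective truth of the limit condition is all the existence theorem for cost functions requires. Your Solovay-test verification then goes through for each fixed $k$ after scaling by $2^k$. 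Finally, your closing worry that $Z$ must ``detect when the budget is exhausted'' is misplaced: no such detection is needed. The reduction is $A(x)=A_{s_x}(x)$, where $s_x$ is the least stage with $Z\in\mathcal{G}_{k,x}[s_x]$; the Solovay-test argument shows this is correct for all but finitely many $x$, and the exceptions are hardwired into the (non-uniform) reduction. Relatedly, ``$A\restriction(x{+}1)$ changes infinitely often'' is not the right formulation for a c.e.\ set $A$; the correct statement is that only finitely many $x$ can enter $A$ after their stage $s_x$.
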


\begin{proof}  By a result of Hirschfeldt and Miller (see \cite[Thm. 5.3.15]{Nies:book}), for each null $\SI 3$ class $\mathcal S$ there is a promptly simple set Turing below all   ML-random sets in $\mathcal S$. Apply this to the class $\HH$ from the proof of Theorem \ref{1}.  \end{proof}

Next we show that $\Shigh^\Diamond$    is a proper subclass of the c.e. $K$-trivial sets. Since some superhigh ML-random set is not above $\ES'$,  each set in $\Shigh^\Diamond$  is a base for ML-randomness, and therefore $K$-trivial (for details of this argument, see \cite[Section 5.1]{Nies:book}). It remains to show strictness. In fact in place of the superhigh sets we can consider the possibly smaller class of  sets $Z$ such that $G\ltt Z'$, for some fixed set $G \ge_{tt} \ES''$. Let $\MLR=\{R: R\text{ is ML-random}\}$.
\begin{thm}\label{thm:codeintoshighrds}
 Let $S$ be a $\PPI$ class such that $\ES \subset S \sub \MLR$. Then there is a $K$-trivial c.e.\ set $B$ such that  

\bc $\fao G \ex Z \in S \, [ B \not \leT Z \lland G \ltt Z' ]$. \ec
\end{thm}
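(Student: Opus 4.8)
The plan is to fix once and for all a noncomputable, promptly simple, \KT\ c.e.\ set $B$ (such a set exists by the standard cost-function construction), and then, for an arbitrary given $G$, to produce the witness $Z\in S$ by a single construction that simultaneously codes $G$ into $Z'$ and diagonalizes $B$ away from $Z$. The same $B$ will serve for every $G$, since the two tasks below use only that $B$ is noncomputable c.e.\ and \KT. Because $S\sub\MLR$ has no computable members, $S$ has no isolated points and is therefore perfect, and we may write $S=[T]$ for a computable tree $T$; the perfectness gives $2$-branching room everywhere along the extendible part of $T$, which is what both the coding and the diagonalization will exploit. Throughout, the construction of $Z$ may consult any oracle (we only assert that $Z$ exists), so I freely use $\ES'$ to locate extendible nodes of $T$, $B$ to read its values, and $G$ to read its bits.

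For the coding I would adapt Mohrherr's truth-table jump-inversion---the very construction already invoked in Theorem~\ref{kjosEmail}---but run it \emph{inside} $S$. At a convergence stage for $e$ one searches for an extension of the current string that lies in $T$ and forces $J^Z(e)\DA$; at a coding stage for $e$ one moves to the $G(e)$-child of an extendible $2$-branching node of $T$ instead of appending a free bit. Mohrherr's analysis then recovers each bit $G(e)$ from $Z'$ with computably bounded use, giving $G\ltt Z'$; the recursion theorem is used to fix the indices of the recovering functionals in advance, so that the read-back is a genuine truth-table reduction. Since $G\ge_{tt}\ES''$ in the intended application, this also makes $Z$ superhigh, as desired.

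For the diagonalization $\Phi_e^Z\neq B$ I would force with nonempty $\PPI$ subclasses of $S$ and use the noncomputability of $B$. Given the current condition, a nonempty $\PPI$ class $P\sub S$, there are three cases. If some $Z\in P$ has $\Phi_e^Z(x)\UA$ for some $x$, restrict to the nonempty $\PPI$ subclass $\{Z\in P:\Phi_e^Z(x)\UA\}$, on which $\Phi_e^Z\neq B$ because $B$ is total. If some string $\sss$ with $[\sss]\cap P\neq\ES$ has $\Phi_e^\sss(x)\DA\neq B(x)$, restrict to $P\cap[\sss]$. If neither holds for any $x$, then by compactness $\Phi_e^Z=B$ for all $Z\in P$ and $B(x)$ can be read off uniformly from the finite levels of $T$, making $B$ computable---a contradiction. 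The value $B(x)$ is available because the construction may use $B$ as an oracle. Hence $D_e$ can always be met at any condition, and the intersection of a nested sequence of shrinking conditions yields a single $Z\in S$.

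The main obstacle is reconciling the coding with the truth-table requirement while staying in $S$: the read-back of $G(e)$ from $Z'$ may use only boundedly many jump-facts about $Z$ with \emph{computable} indices, yet the $2$-branching coding nodes of $S$ are located only with the help of $\ES'$, and $Z$ may be forced through spurious $2$-branching nodes of $T$ (those with a dead-end sibling) that would pollute the read-back. I would handle this with a reservation scheme: fix the recovering indices by the recursion theorem, enumerate candidate coding nodes from the apparent branchings of $T$, and route $Z$ only through genuinely extendible $2$-branching nodes, using the dead-end test (which is $\SI1$, hence usable by the functionals via the jump) to arrange that the jump-queries align with the genuine coding nodes. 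Interleaving the coding and diagonalization requirements in one construction, and checking that every step preserves nonemptiness and extendibility, is the technical heart; the perfectness of $S$ is exactly what guarantees there is always enough branching room to carry it out.
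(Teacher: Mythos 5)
Your diagonalization half is fine as far as it goes (it is the classical cone-avoidance forcing, and it does show that for any noncomputable $B$ and any nonempty $\PPI$ class one can force $\Phi_e^Z \neq B$), but the proposal has a genuine gap exactly at the point you yourself flag as ``the technical heart'': nothing in your sketch produces a \emph{truth-table} reduction $G \ltt Z'$, as opposed to a Turing reduction $G \leT Z'$. The standard route to $X \ltt Z'$ is to exhibit a $Z$-computable approximation to $X$ whose number of mind changes is \emph{computably} bounded (this is the $\BLR$ characterization used throughout the paper). Your coding device --- extendible $2$-branching nodes of $T$, located with $\ES'$ and patched by ``dead-end tests'' --- cannot supply such a bound: the current candidate for ``the first genuinely extendible branching node above $\sss$'' can be killed and replaced by a deeper candidate unboundedly many times, and there is no computable bound on how often this happens. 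Dead-end facts being $\SI{1}$ lets $Z'$ \emph{Turing}-compute the true coding locations (since $\ES' \leT Z'$), but a tt-reduction must fix its queries and truth table in advance and be total on all oracles, which is exactly what unbounded mind changes destroy. It is telling that your argument never uses the hypothesis $S \sub \MLR$ for the coding, only to get perfectness: the paper's coding is Ku\v cera's measure-based coding, which needs that hypothesis. Since $\ES \neq P \sub \MLR$ forces $\leb(P) > 0$, one can fix $k$ with $\tp{-k} < \leb(P)$ and code a bit by steering $Z$ into the leftmost versus rightmost length-$k$ cylinder meeting $P_t$; these two approximations move monotonically and change at most $2^k$ times, which is precisely the computable mind-change bound that perfectness alone cannot give.

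The second gap is your decision to fix $B$ in advance and diagonalize against it by forcing. Even granting the measure-based coding, the read-back of $G(e)$ from $Z$ depends on the \emph{indices} of the forcing conditions $P^\beta$ chosen before the $e$-th coding step, so these indices must themselves be computably approximable with computably bounded changes. With a pre-fixed c.e.\ set $B$, deciding your three-way case split is a $\SI 2$-type question, and the natural stagewise approximation to it is not boundedly convergent: a witness $\sss$ with $\Phi_e^\sss(x)\DA \neq B_s(x)$ can be invalidated every time $B$ enumerates $x$, with no computable bound on how often this recurs. This is exactly why the paper builds $B$ \emph{during} the construction: in Lemma~\ref{changes}, the procedure itself chooses when to put a cheap witness $x$ into $B$ (namely, when the whole current clopen approximation computes $\Phi_{e,s}^Z(x)=0$ and $\cost(x,s) \le \tp{-m}$), so each diagonalization is resolved with at most $\tp{m+1}$ index changes, the cost function keeps $B$ \KT, and the tree of classes indexed by all strings $\aaa$ (i.e., by all possible $G\restr e$ at once) makes a single $B$ work for every $G$. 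Incidentally, the Mohrherr-style convergence-forcing stages you import are both unnecessary (the paper needs no control of $Z'$ beyond the coding) and harmful: inside a $\PPI$ class, deciding whether some extendible extension forces $J^Z(e)\DA$ is again a $\SI 2$ question, compounding the bounded-approximability problem.
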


\begin{cor} 
There is a $K$-trivial  c.e.\ set $B$ and a superhigh ML-random set $Z$
such that $B\not\le_T Z$. Thus the class of c.e.\ sets Turing below all ML-random superhigh sets  is a proper subclass of the c.e.\ $K$-trivials. 
\end{cor}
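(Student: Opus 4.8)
The plan is to exploit that $S$, being a nonempty $\PPI$ class all of whose members are ML-random and hence noncomputable, has no isolated points and is therefore perfect. Fix a computable tree $T$ with $S=[T]$ and work with its genuine branchings: above every extendible node lie two incomparable extendible nodes, so $S$ carries a full copy of $2^{<\omega}$ worth of splittings. I would build the c.e.\ set $B$ once and for all by a cost-function construction, and then, given an arbitrary $G$, navigate $S$ non-effectively (using $G$ together with $\ES'$ to locate the splittings) so as to produce a single branch $Z=Z_G\in S$ meeting both demands simultaneously, interleaving ``coding steps'' with ``diagonalization steps'' along the branch.

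For the coding $G\ltt Z'$ I would adapt the jump-control of Mohrherr's tt jump inversion (as used in the proof of Theorem~\ref{kjosEmail}) to the branching structure of $S$. Reserve a computable sequence of coding locations; at the $n$-th coding step, sitting at an extendible node, route $Z$ through the $G(n)$-child of the next splitting and arrange, via a designated functional, that the value $Z'(e_n)$ records which way the branch turned. The perfectness of $S$ guarantees that the turns can be freely prescribed, while the jump certifies the genuineness of each branching with only a bounded (truth-table) number of queries, so that $G(n)$ is recovered from $Z'$ by a truth-table reduction no matter which $G$ was given. The remaining, freely choosable branchings are left for diagonalization.

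The diagonalization requirement $R_e\colon \Phi_e^Z\neq B$ I would meet by finite extension: at its turn, with current extendible node $\rho$, search for an extendible $\rho'\succeq\rho$ and an $x$ with $\Phi_e^{\rho'}(x)\downarrow\neq B(x)$ and commit $Z$ to $\rho'$. For this search to succeed \emph{for every} $G$ with a \emph{single fixed} $B$, the crucial property to engineer into $B$ is: for all $e$ and all extendible $\rho$ there are an extendible $\rho'\succeq\rho$ and an $x$ witnessing a convergent disagreement. I would arrange this by assigning, to each relevant $(e,\rho)$, fresh large followers $x$, waiting for $\Phi_e$ to converge on $x$ along an apparently extendible extension of $\rho$: if the value is $1$ I keep $x$ out of $B$ at no cost, and if it is $0$ I enumerate $x$ into $B$, paying the cost $c(x,s)$. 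Choosing followers of small cost keeps the total cost finite, hence keeps $B$ $K$-trivial, and makes $B$ noncomputable.

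The main obstacle is exactly the clash between three features: $B$ is c.e., so a disagreement can be created only by adding an element, at a cost; the disagreement must be forced on a \emph{genuinely extendible} branch of the merely co-c.e.\ pruned tree; and the total cost must stay finite. The dangerous adversary is a $\Phi_e$ that predicts $B$ correctly on every extendible extension of some $\rho$, placing disagreements only on dead-end branches; this would defeat every $Z_G\succeq\rho$ and cannot be excluded by cardinality or Baire category, since the coded subclass may be null and $B\leT\ES'\leT G\oplus\ES'$ in any case. Overcoming it is precisely where the ML-randomness of the members of $S$ must enter: via a decanter-style weight argument one shows that if $\Phi_e^Z$ agreed with $B$ throughout a sufficiently large portion of the branching structure, then the bits of $B$ could be recovered too cheaply from an ML-random $Z$, contradicting the $K$-triviality bound; the finite cost budget is what licenses the finitely many follower enumerations needed to puncture each such agreement on an extendible branch.
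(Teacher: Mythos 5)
You have the paper's overall architecture right (a single c.e.\ set $B$ built by a cost-function construction, then for each $G$ a branch $Z\in S$ located using $G\oplus\ES'$, alternating coding steps with diagonalization steps), but the step you yourself flag as the main obstacle is a genuine gap, and the repair you propose cannot work. You insist on meeting $R_e$ by a \emph{convergent} disagreement $\Phi_e^{\rho'}(x)\downarrow\neq B(x)$ at an extendible node, and you propose to defeat the adversary who agrees with $B$ on all extendible nodes by a ``decanter-style weight argument'' from the ML-randomness of the members of $S$. No such argument is available: the adversary need not converge at all (a nowhere-defined $\Phi_e$ agrees with $B$ wherever defined above every extendible node, so convergent disagreements simply need not exist), and ``the bits of $B$ are recovered too cheaply from an ML-random $Z$'' is not in itself contradictory --- every $K$-trivial set is a base for ML-randomness, so $K$-trivial sets are computed by plenty of ML-random sets; that is exactly why the theorem needs an explicit diagonalization rather than a contradiction derived from agreement. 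The paper's Lemma~\ref{changes} supplies the missing idea, and it uses compactness, not randomness. Fix a witness $x\in\NN^{[e]}$ of small cost and consider the $\PPI$ subclass $\{Z\in Q\colon \neg\Phi_e^Z(x)=0\}$: either it is nonempty --- then keep $x$ out of $B$ and pass to this subclass, so every $Z$ in it has $\Phi_e^Z(x)$ divergent or $\neq 0=B(x)$ (divergence counts as a win; no convergence is ever required) --- or at some finite stage the \emph{whole} clopen approximation $Q_s$ satisfies $\Phi_{e,s}^Z(x)=0$, an event the c.e.\ construction can detect by compactness, whereupon it enumerates $x$ into $B$ (provided $\cost(x,s)<\tp{-m}$; otherwise a fresh $x$ is chosen, which happens at most $2^m$ times) and thereby defeats \emph{every} $Z\in Q$ simultaneously. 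This also disposes of your dead-end-branch worry: enumeration into $B$ is triggered only when the entire clopen cover of $Q$ shows the value $0$, and that cover contains the true members of $Q$.

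A second, smaller gap concerns the coding: superhighness needs $G\ltt Z'$, and your splitting-node coding only plausibly gives $G\leT Z'$. The splitting positions of the (co-c.e., dynamically changing) trees are not computable, so recovering the route of $Z$ from $Z'$ by your method has no computable bound on its queries, which is what a truth-table reduction requires. The paper gets tt-ness from \Kuc\ coding at computable lengths: the leftmost and rightmost strings $y_{0,t},y_{1,t}$ of a fixed length $k$ with $\tp{-k}<\leb P$ whose cylinders meet $P_t$ change at most $2^k$ times (and the hypothesis $S\sub\MLR$ is used precisely here, to compute such a $k$ from an index of $P$), so there is $f\leT Z$ with $G(e)=\lim^{\text{comp}}_s f(e,s)$, a limit with a computably bounded number of changes, and such bounded limits are exactly what tt-reduce to $Z'$. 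In short: keep your architecture, but replace the randomness-based rescue of the diagonalization by the compactness dichotomy of Lemma~\ref{changes}, and replace splitting-node coding by fixed-length \Kuc\ coding.
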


\begin{proof}[Proof of Theorem~\ref{thm:codeintoshighrds}] We assume fixed an indexing of all the $\PPI$ classes. Given an index for a $\PPI$ class $P$ we have an  effective approximation $P = \bigcap_t P_t$ where $P_t$ is a clopen set  (\cite[Section 1.8]{Nies:book}).  

To achieve $G \ltt Z'$ we use a variant of \Kuc\ coding.
Given (an index of) a  $\PPI $ class $P$ such that $\ES \subset P \sub \MLR$, we can effectively determine $k\in \NN$ such that $\tp{-k} < \leb P$. In fact $k \le K(i)+ O(1) \le 2 \log i + O(1)$ where $i$ is the index for $P$ (see \cite[3.3.3]{Nies:book}). At stage $t$ let 
\begin{equation} \label{eq: yy} y_{0,t}, y_{1,t}, \end{equation} 
 respectively be the leftmost and rightmost strings $y$ of length $k$ such that $[y] \cap P_t \neq \ES$. Then $y_0$ is left of $y_1$ where $y_a= \lim_t y_{a,t}$.  Note that the number of changes in these approximations is bounded by $2^k$. 

Recall that $(\Phi_e)\sN e$ is an effective listing of the Turing functionals. The following  will be used in a ``dynamic forcing'' construction to ensure that $B \neq \Phi_e^Z$, and to make $B$ $K$-trivial. Let $\cost$ be the standard cost function for building a $K$-trivial set, as defined in \cite[5.3.2]{Nies:book}.  Thus $\cost(x,s) = \sum_{x < w \le s} 2^{-K_s(w)}$.
 
\begin{lemma} \label{changes} Let $Q$ be a $\PPI$ class such that $\ES \subset Q \subset \MLR$. Let $e,m \ge 0$. Then there is a nonempty $\PPI$ class $ P \subset Q$ and $x \in \NN$ such that either 

\bi \item[(a)] $\fa Z \in P \, \neg \Phi_e^Z(x) =0 $, or
\item[(b)] $\ex s \, \cost (x,s) \le \tp{-m}  \lland \fa Z \in P^s_s \, \Phi_{e,s}^Z(x) =0$, \ei
where $(P^t)\sN t$ is an effective sequence of (indices for) $\PPI$ classes such that $P= \lim_t^\text{\rm comp} P^t $ with at most $\tp{m+1}$ changes.   \end{lemma}
The plan is to put $x$ into $B$ in case (b). The change in the approximations $P^t$ is due to changing the candidate $x$ when 
its cost becomes too large.

To prove the lemma, we give a procedure constructing     the required  objects.

\n {\it Procedure $C(Q,e,m)$.}
\n {\it Stage $s$.}
\bi \item[(a)] Choose $x \in \NN^{[e]}$, $x \ge s$.
\item[(b)] If $\cost(x,s) \ge \tp{-m}$, {\sc goto} (a).

\item[(c)] If $\{Z \in Q_s\colon \, \neg \Phi_{e,s}^Z(x) =0\} \neq \ES$ let $P^s = \{Z \in Q\colon \, \neg \Phi_{e}^Z(x) =0\}$ and {\sc goto} (b). (In this case  we keep $x$ out of $B$ and win.) Otherwise let $P^s = Q$ and {\sc goto} (d). (We will put $x$ into  $B$ and win.)

\item[(d)]  {\sc End}. 
\ei
Clearly we choose a new $x$ at most $2^m$ times, so the number of changes of $P^t$ is bounded by $\tp{m+1} $. 

To prove the theorem, we build at each stage $t$ a tree of $\PPI$ classes $P^{\aaa,t}$, where $\aaa \in \strcantor$. The number of changes of  $P^{\aaa,t}$ is bounded computably in~$\aaa$.

\n {\it Stage} $t$. Let $P^{\varnothing, t}= S$.

\bi \item[(i)] If $P=P^{\aaa,t}$ has been defined let, for $b \in \twoset$, \bc $Q^{\aaa b,t} = P^{\aaa,t}\cap [y_{b,t}]$, \ec
where the strings $y_{b,t}$ are as in (\ref{eq: yy}).

\item[(ii)]  If   $Q = Q^{\beta,t}$ is newly   defined let   $e = |\beta|$, let $m$ equal $n_\beta$ (the code number for $\beta$) plus the number of times the index for $Q^\beta$ has changed so far. From now on  define $P^{\beta,t}$ by the procedure $C(Q,e,m)$  in Lemma~\ref{changes}. If it reaches (d), put $x$ into $B$. 
\ei

\n {\bf Claim 1.} {\it  (i) For each $\aaa$ the index $P^{\aaa,t}$ reaches a limit $P^\aaa$. The number of changes is computably bounded in $\aaa$. 

\n  (ii) For each $\beta$ the index $Q^{\beta,t}$ reaches a limit $Q^\beta$. The number of changes is computably bounded in $\beta$. 
}

\n The claim is verified by induction, in the form $P^\aaa \rightarrow Q^{\aaa b} \rightarrow P^{\aaa b}$. This yields a computable definition of the   bound on the number of changes.

Clearly (i) holds when $\aaa = \estring$.

\n {\it Case  $Q^{\aaa b}$:} we can compute by inductive hypothesis an upper bound on the  index for $P^\aaa$, and hence an upper  bound $k_0$ on $k$ such that $\tp {-k} < \leb P^\aaa$. If $N$ bounds the number of changes for $P^\aaa$ then $Q^{\aaa b}$ changes at most $N 2^{k_0}$ times.

\n {\it Case  $P^{\beta}$, $\beta \neq \estring$:} Let $M$ be the bound on the number of changes for   $Q^{\beta}$. Then we always have  $m \le M + n_\beta$ in (ii), so the number of changes for $P^{\beta}$ is at most $M \tp{M + n_\beta +1}$.

\n {\bf Claim 2.} {\it  (i) Let $e = |\beta| >0$.  Then $B \neq \Phi_e(Z)$ for each $Z\in P^\beta$. 
}

\n This is clear, since eventually the procedure in Lemma \ref{changes} has a stable $x$ to diagonalize with.

Given $G$ define $Z\leT \ES' \oplus G$ as follows. For $e >0$ let $\beta = G \uhr e$. Use $\ES'$ to find the final $P^\beta$, and to determine $y_{\beta,b,t}$ ($b \in \twoset$)  for $P= P^\beta$ as  the strings in (\ref{eq: yy}). Let  $y_{\beta,b}= \lim y_{\beta,b,t}$.

 Note that $y_{\gamma} \prec y_\delta$ whenever $\gamma \prec \delta$.  Define $Z$ so that $y_{G(e)} \prec Z$. 

For $G \ltt Z'$ define a function $ f \leT Z$ such that $G(e) = \lim^\text{comp}_s f(e,s)$ (i.e., a computable bounded number of changes).  
Given $e$, to define  $f\uhr e [s]$ search for $t>s$ such that $y_{\aaa,t} \prec Z$ for some $\aaa$ of length $e$, and output $\aaa$. 

\end{proof}

\begin{bibdiv}
\begin{biblist}

\DefineName{k}{Kjos-Hanssen, Bj{\o}rn}
\DefineName{nies}{Nies, Andr{\'e}}

 \bib{CS}{article}{
   author={Cole, Joshua A.},
   author={Simpson, Stephen G.},
   title={Mass problems and hyperarithmeticity},
   journal={J. Math. Log.},
   volume={7},
   date={2007},
   number={2},
   pages={125--143},
   issn={0219-0613},
   review={\MR{2423947}},
}

\bib{B}{article}{
	author={Barmpalias, George},
	title={Tracing and domination in the Turing degrees},
	note={{T}o appear},
	}
\bib{BKLS}{article}{
   author={Binns, Stephen},
   author={Kjos-Hanssen, Bj{\o}rn},
   author={Lerman, Manuel},
   author={Solomon, Reed},
   title={On a conjecture of Dobrinen and Simpson concerning almost
   everywhere domination},
   journal={J. Symbolic Logic},
   volume={71},
   date={2006},
   number={1},
   pages={119--136},
   issn={0022-4812},
   review={\MR{2210058 (2006m:03070)}},
}

\bib{Cooper}{article}{
   author={Cooper, S. B.},
   title={Minimal degrees and the jump operator},
   journal={J. Symbolic Logic},
   volume={38},
   date={1973},
   pages={249--271},
   issn={0022-4812},
   review={\MR{0347572 (50 \#75)}},
}

\bib{DS}{article}{
   author={Dobrinen, Natasha L.},
   author={Simpson, Stephen G.},
   title={Almost everywhere domination},
   journal={J. Symbolic Logic},
   volume={69},
   date={2004},
   number={3},
   pages={914--922},
   issn={0022-4812},
   review={\MR{2078930 (2005d:03079)}},
}

\bib{Friedberg}{article}{
   author={Friedberg, Richard},
   title={A criterion for completeness of degrees of unsolvability},
   journal={J. Symb. Logic},
   volume={22},
   date={1957},
   pages={159--160},
   issn={0022-4812},
   review={\MR{0098025 (20 \#4488)}},
}

\bib{JS}{article}{
   author={Jockusch, Carl G., Jr.},
   author={Soare, Robert I.},
   title={$\Pi \sp{0}\sb{1}$ classes and degrees of theories},
   journal={Trans. Amer. Math. Soc.},
   volume={173},
   date={1972},
   pages={33--56},
   issn={0002-9947},
   review={\MR{0316227 (47 \#4775)}},
}

\bib{Martin}{article}{
   author={Martin, Donald A.},
   title={Classes of computablely enumerable sets and degrees of
   unsolvability},
   journal={Z. Math. Logik Grundlagen Math.},
   volume={12},
   date={1966},
   pages={295--310},
   review={\MR{0224469 (37 \#68)}},
}

\bib{SimpsonSuper}{article}{
   author={Simpson, Stephen G.},
   title={Almost everywhere domination and superhighness},
   journal={MLQ Math. Log. Q.},
   volume={53},
   date={2007},
   number={4-5},
   pages={462--482},
   issn={0942-5616},
   review={\MR{2351944}},
}

\bib{CGM}{article}{
   author={Cholak, Peter},
   author={Greenberg, Noam},
   author={Miller, Joseph S.},
   title={Uniform almost everywhere domination},
   journal={J. Symbolic Logic},
   volume={71},
   date={2006},
   number={3},
   pages={1057--1072},
   issn={0022-4812},
   review={\MR{2251556 (2007e:03071)}},
}

\bib{Mohrherr}{article}{
   author={Mohrherr, Jeanleah},
   title={Density of a final segment of the truth-table degrees},
   journal={Pacific J. Math.},
   volume={115},
   date={1984},
   number={2},
   pages={409--419},
   issn={0030-8730},
   review={\MR{765197 (86a:03043)}},
}

\bib{Nies:book}{book}{
	author={nies},
	title={Computability and Randomness},
	publisher={Oxford University Press},
	year={2009},
	pages={444+ xv},
}

\bib{Nies:nd}{article}{
  author    = {nies},
  title     = {Superhighness and strong jump traceability},
  note                         ={{T}o appear in Proc.\ ICALP 2009}
}

\bib{Schwartz}{thesis}{
	author={Schwarz, S.},
	title={Index sets of computablely enumerable sets, quotient lattices, and computable linear orderings},
	year={1982},
	type={doctoral dissertation},
	institution={University of Chicago},
}
	
\end{biblist}
\end{bibdiv}
\end{document}